\def\rhu{\hbox{$\rightharpoonup$}}
\newtheorem{definition}{Definition}[section]
\newtheorem{lemma}[definition]{Lemma}
\newtheorem{proposition}[definition]{Proposition}
\newtheorem{theorem}[definition]{Theorem}
\newtheorem{de}[definition]{Definition}
\def\bea{\begin{eqnarray*}}
\def\eea{\end{eqnarray*}}
\newcommand{\End}{{\rm End}}
\def\a{$\mbox{\u{a}}$}
\def\S{$\mbox{\c{S}}$}
\def\bea{\begin{eqnarray*}}
\def\eea{\end{eqnarray*}}
\begin{document}

\title{The antipode of a dual quasi-Hopf algebra with nonzero integrals is bijective}
\dedicatory{Dedicated to Fred Van Oystaeyen for his sixtieth birthday}
\author{M. Beattie}\thanks{ The first author's research was
supported by an NSERC Discovery Grant.}
\address{Department of Mathematics and Computer Science, Mount Allison University,
Sackville, New Brunswick, Canada E4L 1E6 }
\email{mbeattie@mta.ca}
\author{M.C. Iovanov}\thanks{The second author was partially supported by the contract nr. 24/28.09.07 with UEFISCU ``Groups, quantum groups, corings and representation theory" of CNCIS, PN II (ID\_1002)}
\address{State University of New York, 244 Mathematics Building,
Buffalo, NY 14260-2900, USA, and University of Bucharest, Fac. Matematica \& Informatica,
Str. Academiei nr. 14,
Bucharest 010014,
Romania}
\email{yovanov@gmail.com}
\author{\c{S}. Raianu}
\address{Mathematics Department, California State University, Dominguez Hills,
1000 E Victoria St, Carson CA 90747, USA}
\email{sraianu@csudh.edu}
\begin{abstract} For $A$ a Hopf algebra of arbitrary dimension  over a field $K$, it
is well-known that if $A$ has nonzero integrals, or, in other
words, if the coalgebra $A$ is co-Frobenius, then the space of
integrals is one-dimensional and the antipode of $A$ is bijective.
Bulacu and Caenepeel recently showed that if  $H$ is a dual
quasi-Hopf algebra with nonzero integrals, then the space of
integrals is one-dimensional, and the antipode is injective. In this
short note we show that  the antipode is bijective.
\end{abstract}
\maketitle
\date{}

\section{Introduction }

The definition of quasi-Hopf algebras and the dual notion of dual
quasi-Hopf algebras is motivated by quantum physics and dates back
to work of Drinfel'd \cite{D}. The theory of integrals for
quasi-Hopf algebras was studied in \cite{pvo, hn, bc}. In \cite{bc},
Bulacu and Caenepeel showed that a dual quasi-Hopf algebra is
co-Frobenius as a coalgebra if and only if it has a nonzero
integral. In this case, the space of integrals is one-dimensional
and  the antipode is injective, so that for finite dimensional dual
quasi-Hopf algebras
the antipode is bijective. In this note, we
use the ideas from a new short proof of the bijectivity of the
antipode for Hopf algebras by the second author \cite{i} to show
that the antipode of a dual quasi-Hopf algebra with integrals is
bijective, thus extending the classical result of Radford \cite{R}
for Hopf algebras.

In this paper we prove

\begin{theorem}\label{TH}
Let $H$ be a co-Frobenius dual quasi-Hopf algebra, equivalently,
a dual quasi-Hopf algebra having nonzero integrals.
Then the antipode of $H$ is bijective.
\end{theorem}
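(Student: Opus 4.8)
The plan is to deduce the theorem from the work of Bulacu and Caenepeel recalled above: since $H$ has nonzero integrals, its antipode $S$ is injective and the spaces of left and of right integrals on $H$ are one-dimensional, so it is enough to prove that $S$ is \emph{onto}. Following the idea of \cite{i}, I would do this by exhibiting a suitable power of $S$ (in the classical Hopf case this power is $S^{4}$) as a bijective map, thereby obtaining a quasi-version of Radford's theorem \cite{R}.

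The first ingredient is the elementary fact that an injective $S$ has surjective transpose $S^{*}\colon H^{*}\to H^{*}$. Using this together with the dual quasi-Hopf axioms for $S$ (which involve the reassociator $\varphi$ and the functionals $\alpha,\beta$), I would show that composing a nonzero left integral $\lambda$ with $S$ produces a right integral of $H$, up to a twist by the distinguished ``modular'' grouplike functional, and, symmetrically, that composing a right integral with $S$ produces a left integral. Since both integral spaces are one-dimensional and $\lambda\neq 0$, iterating yields a nonzero scalar $c$ with $\lambda\circ S^{2}=c\,\lambda$ up to this twist; in particular $S$ does not annihilate $\lambda$ and $\lambda\circ S^{2}$ is again a nonzero left integral.

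The second ingredient is the rational dual. The co-Frobenius property of the coalgebra $H$ --- which is the hypothesis, by \cite{bc} --- provides, for the nonzero left integral $\lambda$, an injective morphism $\Theta\colon H\to H^{*}$ onto the left rational part ${\rm Rat}(H^{*})$, given on elements by $h\mapsto\lambda\leftharpoonup h$ suitably corrected by $\alpha,\beta$ and $\varphi$; a nonzero right integral gives, symmetrically, an embedding onto the right rational part. Since $H$ is co-Frobenius these two rational parts of $H^{*}$ coincide, so composing one embedding with the inverse of the other and keeping track of $S^{*}$ and of the twist above realizes the relevant power of $S$ as a composite of $\Theta$ and $\Theta^{-1}$, of (twisted) conjugations by the invertible distinguished grouplike-type elements of $H$, and of the convolution-invertible reassociator. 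Each of these factors is bijective, so the power of $S$ in question is bijective; as it factors through $S$, it follows that $S$ is onto, and together with the injectivity from \cite{bc} we conclude that $S$ is bijective.

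The main obstacle is the non-associativity of the multiplication of $H$, encoded by $\varphi$: the candidate ``inverse antipode'', the modular grouplike functional, the maps $h\mapsto\lambda\leftharpoonup h$, and the conjugation-type maps above all have to be redefined with $\varphi$-corrections, and the quasi-analogues of the two classical facts used here --- that $\lambda\circ S$ is a twisted right integral, and a Radford-type formula for a power of $S$ --- must be re-derived in this setting. This bookkeeping, though delicate, is essentially computational; once the quasi-versions are in place, the passage from bijectivity of a power of $S$ to bijectivity of $S$ is formal, exactly as in \cite{i}.
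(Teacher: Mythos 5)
Your plan is genuinely different from the paper's, and as it stands it has two real gaps rather than mere ``bookkeeping.'' First, the step ``iterating yields a \emph{nonzero} scalar $c$ with $\lambda\circ S^2=c\lambda$'' is not justified: one-dimensionality of the integral spaces does not rule out $\lambda\circ S=0$, which simply says $\lambda$ vanishes on the subcoalgebra $S(H)$ --- and ruling that out when $S(H)$ might be proper is essentially the surjectivity you are trying to prove (in the classical case one escapes via density of ${\rm Rat}(H^*)$, an ingredient you never invoke). Second, and more seriously, the Radford-type formula for a power of $S$ is not an input you can borrow: even for ordinary co-Frobenius Hopf algebras, the known derivations of the $S^4$ formula \emph{presuppose} bijectivity of $S$, so obtaining a quasi-version of it first would reverse the actual logical order and is not ``essentially computational.'' Relatedly, the embedding $\Theta$ supplied by \cite{bc} (formula (\ref{e7}) above) has $S$ built into it ($h\mapsto \sigma(\cdots)(S(h_4)\rightharpoonup T)\sigma^{-1}(\cdots)$); your plan needs a bijection $H\to{\rm Rat}(H^*)$ of the form $h\mapsto\lambda\leftharpoonup h$ ``corrected by $\alpha,\beta,\varphi$'' with no $S$ in it, and the existence of such an isomorphism is exactly what is not available and would have to be proved.

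The paper sidesteps both problems. Using only injectivity of $S$ (hence a left inverse $S^l$), it writes down an explicit map $p:{}^a H\to{\rm Rat}(H^*)$ (Proposition \ref{surjmap}) which is \emph{a priori} only a left $H$-colinear map; it is surjective because $p\circ S$ coincides with the known isomorphism $\theta^*(T\otimes-)$, with no claim that $p$ itself is injective. Then, since co-Frobenius implies ${}^H H$ is projective, the induced surjection ${}^aH\to H$ splits as a comodule map, and colinearity of the splitting forces $\theta_a\circ S$ to be surjective; Lemma \ref{theta} (the $\varphi$-twisted invertibility of left multiplication by a grouplike) finishes the argument. The projectivity/splitting step is the idea your proposal is missing: it converts ``$p\circ S$ is onto'' into ``$S$ is onto'' without ever needing $\lambda\circ S\neq 0$, an $S$-free isomorphism, or a Radford formula. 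If you want to pursue your route, you would first have to prove the quasi-analogues of the integral--antipode identities from scratch, which is a substantially harder project than the theorem itself.
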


\section{Preliminaries}

In this section we briefly review the definition of a dual
quasi-Hopf algebra over a field $K$.  We refer the reader to
\cite{abe, dnr, sw} for the basic definitions and properties of
coalgebras and their comodules and of Hopf algebras. For the
definition of dual quasi-Hopf algebra we follow \cite[Section
2.4]{M}.

\begin{de} A
  dual quasi-bialgebra $H$ over   $K$ is a coassociative
coalgebra $(H, \Delta, \varepsilon)$   together with   a unit
$u:K\rightarrow H$, $u(1)=1$, and a not necessarily associative
multiplication $M:H\otimes H\rightarrow H$. The maps $u$ and $M$ are
coalgebra maps. We write $ab$ for $M(a \otimes b)$.  As well, there
is an element $\varphi\in(H\otimes H\otimes H)^*$ called the {\it
reassociator}, which is invertible with respect to the convolution
algebra structure of $(H\otimes H\otimes H)^*$.  The following
relations must hold for all $h,g,f,e\in H$:
\begin{eqnarray}
h_1(g_1f_1)\varphi(h_2,g_2,f_2) & = & \varphi(h_1,g_1,f_1)(h_2g_2)f_2 \label{e1}\\
1h=h1=h \label{e2}\\
\varphi(h_1,g_1,f_1e_1)\varphi(h_2g_2,f_2,e_2) & = &
\varphi(g_1,f_1,e_1)\varphi(h_1,g_2f_2,e_2)\varphi(h_2,g_3,f_3) \label{e3}\\
\varphi(h,1,g) = \varepsilon(h)\varepsilon(g) \label{e4}
\end{eqnarray}
\end{de}
Here we use Sweedler's sigma notation with the summation symbol
omitted.

\begin{de} A dual quasi-bialgebra $H$ is called a dual quasi-Hopf algebra
if  there exists an antimorphism $S$ of the coalgebra $H$ and
elements $\alpha,\beta\in H^*$ such that for all $h\in H$:
\begin{eqnarray}
S(h_1)\alpha(h_2)h_3=\alpha(h)1, & & h_1\beta(h_2)S(h_3)=\beta(h)1 \label{e5}\\
\varphi(h_1\beta(h_2),S(h_3),\alpha(h_4)h_5) & = &
\varphi^{-1}(S(h_1),\alpha(h_2)h_3,\beta(h_4)S(h_5))
=\varepsilon(h). \label{e6}
\end{eqnarray}
\end{de}
Let $H$ be a dual quasi-Hopf algebra. As in the Hopf algebra case, a
left integral on $H$ is an element $T\in H^*$ such that
$h^*T=h^*(1)T$ for all $h^* \in H^*$; the space of left integrals is
denoted by $\int_l$ and by \cite[Proposition 4.7]{bc} has dimension
0 or 1. Right integrals are defined analogously with space of right
integrals denoted by $\int_r$. Suppose $0 \neq T \in \int_l$.   It
is easily seen that $\int_l$ is a two sided ideal of the algebra
$H^*$, and $KT\subseteq Rat(H^*)$ with right comultiplication given
by $T\mapsto T\otimes 1$. Since for co-Frobenius coalgebras
$Rat(H^*)=Rat({}_{H^*}H^*)=Rat(H^*_{H^*})$,   $KT$ must have  left
comultiplication $T\mapsto a\otimes T$. By coassociativity, $a$ is a
grouplike element, called the distinguished grouplike of $H$. Then,
for all $h^* \in H^*$,
\begin{eqnarray}
Th^* & = & h^*(a)T. \label{e7.5}
\end{eqnarray}

 From \cite[Proposition 4.2]{bc}, the function $\theta^*:\int_l\otimes H\rightarrow
Rat(_{H^*}H^*)$
\begin{eqnarray}
\theta^*(T\otimes h)= \sigma(S(h_5)\otimes \alpha(h_6)h_7)(S(h_4)
\rightharpoonup T ) \sigma^{-1}(S(h_3)\otimes\beta(S(h_2))S^2(h_1))
\label{e7}
\end{eqnarray}
is an isomorphism of right $H$-comodules, where $\sigma :H\otimes
H\rightarrow H^*$  is defined by $\sigma(h\otimes g)(f)=\varphi
(f,h,g)$, $\sigma^{-1} $ is the convolution inverse of $\sigma$,
and, as usual, $(h \rightharpoonup T)(g) = T(gh)$.

\section{Proof of the theorem}
Let $H$ be a quasi-Hopf algebra with $0 \neq T \in \int_l$. As in
\cite{i}, for each right $H$-comodule $(M,\rho)$, we denote by
${}^aM$ the left $H$-comodule structure on $M$ defined by
$m_{-1}^a\otimes m_{0}^a=aS(m_1)\otimes m_0$, where
$\rho(m)=m_0\otimes m_1$. Denote the induced right $H^*$-module
structure on ${}^aM$ by $m\cdot^a h^*=h^*(m_{-1}^a)m_{0}^a =
h^*(aS(m_1))m_0$. By
  \cite[Corollary 4.4]{bc} the antipode $S$ of
  $H$ is injective, and therefore
has a left inverse $S^l$. Then, for $\sigma$ as above, we have the
following analogue of \cite[Proposition 2.5]{i}:
\newpage
\begin{proposition}\label{surjmap}
The map $p:{^a{H}}\to Rat(H^*)$ defined by
\begin{eqnarray}
p(h) & = &
\sigma(S(S^l(h_3))\otimes\alpha(S^l(h_2))S^l(h_1))*(h_4\rhu T)
*\sigma^{-1}(h_5\beta(h_6)\otimes S(h_7)) \label{e8}
\end{eqnarray}
is a surjective morphism of left $H$-comodules.
\end{proposition}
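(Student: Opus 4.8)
The plan is to mirror the Hopf-algebra argument of \cite[Proposition 2.5]{i}, using the isomorphism $\theta^*$ of \cite[Proposition 4.2]{bc} as the main structural input. First I would observe that the formula \eqref{e8} defining $p$ is essentially obtained from the formula \eqref{e7} for $\theta^*(T\otimes -)$ by substituting $h \mapsto S^l(h)$ in the appropriate Sweedler components; more precisely, I expect that $p\bigl(S(h)\bigr)$ (or $p$ precomposed with $S$) recovers $\theta^*(T\otimes h)$ up to the identification of $\beta(S(h))S^2(h)$-type terms with $h\beta(\cdot)$-type terms via the defining relations \eqref{e5} and \eqref{e6} of the antipode. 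So the first step is a bookkeeping lemma: rewrite \eqref{e8} so that it visibly factors through $\theta^*$, using that $S$ is an anti-coalgebra map (so $\Delta S = (S\ot S)\tau\Delta$) to move the $S^l$'s past the comultiplication, and using injectivity of $S$ (hence existence of $S^l$ with $S^l S = \mathrm{id}$) from \cite[Corollary 4.4]{bc}.

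Second, I would check that $p$ is a morphism of left $H$-comodules. The left comodule structure on ${}^aH$ is $h \mapsto aS(h_1)\ot h_2$, and on $Rat(H^*)$ it is the rational left $H^*$-comodule structure, which for co-Frobenius $H$ coincides with the one transported along $\theta^*$ from the right $H$-comodule structure on $\int_l \ot H$ given by $T\ot h \mapsto T\ot h_1 \,(?)$ — here one must be slightly careful, since $\theta^*$ is an isomorphism of \emph{right} $H$-comodules and we want a statement about \emph{left} $H$-comodules, so the transition from right to left is exactly the ${}^a(-)$ construction, and the grouplike $a$ enters precisely because of the comultiplication $T \mapsto a\ot T$ on $KT$ recorded around \eqref{e7.5}. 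Concretely, I would verify the comodule compatibility by a direct computation: apply $(\mathrm{id}\ot p)$ to $aS(h_1)\ot h_2$ and compare with the rational comultiplication of $p(h)$, pushing everything through the definitions of $\sigma$, $\rightharpoonup$, and the $3$-cocycle identity \eqref{e3} for $\varphi$ together with \eqref{e4} to handle the normalization.

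Third, surjectivity. This is where $\theta^*$ does the real work: since $\theta^* : \int_l \ot H \to Rat({}_{H^*}H^*)$ is an isomorphism and $\int_l = KT$ is one-dimensional, the map $h \mapsto \theta^*(T\ot h)$ is already a linear surjection onto $Rat(H^*)$; once step one identifies $p$ (up to precomposition with the coalgebra automorphism-like map built from $S$ and $S^l$ and up to the grouplike twist) with $h \mapsto \theta^*(T\ot h)$, surjectivity of $p$ follows. The point of re-expressing the map with $S^l$ rather than $S$ in \eqref{e8}, rather than just quoting $\theta^*$, is that the source is ${}^aH$ itself (a left $H$-comodule built from $H$), which is what is needed downstream to compare dimensions/cogenerators and ultimately deduce that $S$ is surjective.

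The main obstacle I anticipate is the first step — the precise bookkeeping showing \eqref{e8} factors through $\theta^*$ — because the reassociator $\varphi$ is genuinely non-trivial, so the $\sigma$ and $\sigma^{-1}$ factors do not simply cancel and must be manipulated using \eqref{e3}, \eqref{e4}, and the antipode axioms \eqref{e5}--\eqref{e6}; in particular one has to correctly match the term $\sigma^{-1}(S(h_3)\ot \beta(S(h_2))S^2(h_1))$ appearing in \eqref{e7} with $\sigma(S(S^l(h_3))\ot \alpha(S^l(h_2))S^l(h_1))$ in \eqref{e8} after the substitution, which requires knowing how $S^l$ interacts with $\alpha$, $\beta$, and $\varphi$ — information that itself may need to be extracted from the relations by applying $S^l$ to \eqref{e5} and \eqref{e6}. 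The comodule-morphism check (step two) and surjectivity (step three) should then be comparatively routine, the former a diagram chase with the cocycle identity and the latter an immediate consequence of \cite[Proposition 4.2]{bc}.
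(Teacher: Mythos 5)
Your plan is essentially the paper's proof: colinearity is verified by the equivalent direct computation $p(h)*c^*=p(h\cdot^a c^*)$ (the paper uses \eqref{e1}, \eqref{e5} and the integral identity \eqref{e7.5} rather than the cocycle identity \eqref{e3}--\eqref{e4}), and surjectivity follows from the identity $p\circ S=\theta^*(T\ot -)$ together with \cite[Proposition 4.2]{bc}. One correction: the bookkeeping you anticipate as the main obstacle is in fact immediate, since substituting $h\mapsto S(h)$ into \eqref{e8} and using $\Delta S=(S\ot S)\tau\Delta$ together with $S^lS=\mathrm{id}$ carries the $\sigma$-factor of \eqref{e8} onto the $\sigma$-factor of \eqref{e7} and the $\sigma^{-1}$-factor onto the $\sigma^{-1}$-factor (the scalar $\beta(S(h_2))$ simply slides across the tensor sign), so the cross-matching of a $\sigma$-term with a $\sigma^{-1}$-term that you describe, and any appeal to \eqref{e5}--\eqref{e6} at that stage, is not needed.
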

\begin{proof}
Let $\Psi:=\sigma(S(S^l(h_3))\otimes\alpha(S^l(h_2))S^l(h_1))$. Then
for $c^*\in H^*$ and $g\in H$: \bea
(p(h)*c^*)(g)&=&p(h)(g_1)c^*(g_2)\\
(\ref{e8})&=&\Psi(g_1)T(g_2h_4)\sigma^{-1}(h_5\beta(h_6)\otimes S(h_7))(g_3)c^*(g_4)\\
&=&\Psi(g_1)T(g_2h_4)\varphi^{-1}(g_3,h_5\beta(h_6),S(h_7))c^*(g_4)\\
&=&\Psi(g_1)T(g_2h_4)\varphi^{-1}(g_3,h_5,S(h_7))c^*(g_4\beta(h_6))\\
(\ref{e5})&=&\Psi(g_1)T(g_2h_4)\varphi^{-1}(g_3,h_5,S(h_9))c^*(g_4(h_6\beta(h_7)S(h_8)))\\
&=&\Psi(g_1)T(g_2h_4)c^*(\varphi^{-1}(g_3,h_5,S(h_9))g_4(h_6\beta(h_7)S(h_8)))\\
&=&\Psi(g_1)T(g_2h_4)\beta(h_7)c^*(\varphi^{-1}(g_3,h_5,S(h_9))g_4(h_6S(h_8)))\\
(\ref{e1})&=&\Psi(g_1)T(g_2h_4)\beta(h_7)c^*((g_3h_5)S(h_9)\varphi^{-1}(g_4,h_6,S(h_8)))\\
&=&\Psi(g_1)T(g_2h_4)(S(h_9) \rightharpoonup c^* )(g_3h_5)\beta(h_7)\varphi^{-1}(g_4,h_6,S(h_8))\\
&=&\Psi(g_1)(T*(S(h_8)\rightharpoonup c^*))(g_2h_4)\beta(h_6)\varphi^{-1}(g_3,h_5,S(h_7))\\
(\ref{e7.5})&=&\Psi(g_1)(S(h_8) \rightharpoonup c^*)(a)T(g_2h_4)\beta(h_6)\varphi^{-1}(g_3,h_5,S(h_7))\\
&=&\Psi(g_1)T(g_2h_4)\sigma^{-1}(h_5\beta(h_6)\otimes S(h_7))(g_3)c^*(aS(h_8))\\
(\ref{e8})&=&p(h_1)(g)c^*(aS(h_2))\\
&=&p(c^*(aS(h_2))h_1)(g)\\
&=&p(c^*(h^a_{-1})h^a_{0})(g)\\
&=&p(h\cdot^a c^*)(g). \eea Thus $p$ is left $H$-colinear. Finally,
  we note that $p\circ S = \theta^*(T\otimes -)$  where $\theta^*$
  is the isomorphism from (\ref{e7}) so that $p$ is surjective.
\end{proof}
\vspace{2mm}
 Let $c $ be a grouplike element of $H$. From \cite[p.580]{bc}, $c$ is invertible
 with inverse $S(c)$.    We will show that left multiplication
by $c$ has an inverse too.
 \par Let $\theta_c\in\End(H)$ be defined by $\theta_c(h)=ch$ and define the coinner automorphisms
 $q_c$ and $r_c= q_c^{-1} \in\End(H)$ by:
 $$q_c(h)=
 \varphi^{-1}(c,S(c),h_1)h_2\varphi(c,S(c),h_3) \mbox{ and }
  r_c(h)=
 \varphi(c,S(c),h_1)h_2\varphi^{-1}(c,S(c),h_3).$$
\vspace{1mm}
\begin{lemma}\label{theta}
   For any grouplike element $c$ and $\theta_c, r_c,q_c$ as
above,
  $\theta_c\circ\theta_{c^{-1}}=r_c$ and thus $\theta_c$ is bijective
with inverse $\theta_c^{-1}=\theta_{c^{-1}}\circ
q_c=q_{c^{-1}}\circ\theta_{c^{-1}}$.
\end{lemma}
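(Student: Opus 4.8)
The plan is to verify the two asserted identities $\theta_c\circ\theta_{c^{-1}}=r_c$ and (its mirror, obtained by applying $S$) the relation giving $\theta_c^{-1}$, and then to read off bijectivity of $\theta_c$ from the fact that $r_c$ and $q_c$ are already known to be mutually inverse automorphisms. Since all the maps in sight are built from the reassociator $\varphi$ and the multiplication, the whole argument should reduce to a manipulation using the dual quasi-bialgebra axioms; no integrals or antipode properties beyond the grouplike inversion fact $c^{-1}=S(c)$ from \cite[p.\ 580]{bc} are needed here.

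First I would compute $\theta_c(\theta_{c^{-1}}(h)) = c(S(c)h)$ directly. The point is that the multiplication of $H$ is not associative, so $c(S(c)h)$ is not literally $(cS(c))h = \varepsilon(c)h = h$; the discrepancy is measured precisely by $\varphi$. Applying the pentagon/associativity constraint \eqref{e1} in the form $c_1(S(c)_1 h_1)\varphi(c_2,S(c)_2,h_2) = \varphi(c_1,S(c)_1,h_1)(c_2 S(c)_2)h_2$, and using that $c$ and $S(c)$ are grouplike (so $\Delta(c)=c\otimes c$, $\Delta(S(c))=S(c)\otimes S(c)$, and $cS(c)=\varepsilon(c)1=1$), the right-hand side collapses to $\varphi(c,S(c),h_1)\, h_2$ — but one must also move the trailing $\varphi(c,S(c),h_2)$ from the left side to the right. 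Solving for $c(S(c)h)$ (the reassociator values are invertible scalars once the grouplike legs are plugged in) yields exactly $c(S(c)h) = \varphi(c,S(c),h_1)\,h_2\,\varphi^{-1}(c,S(c),h_3) = r_c(h)$, which is the first claim. The mirror identity $\theta_{c^{-1}}\circ\theta_c = \theta_{S(c)}\circ\theta_c$, computed the same way with the roles of $c$ and $S(c)$ interchanged (using $S(c)c = 1$, which also holds since $S(S(c)) = S(c)^{-1} \cdot \text{stuff}$ — more simply, $c$ grouplike gives $S(c)c=\varepsilon(c)1$ directly from one of the relations on grouplikes), gives $S(c)(ch) = q_c(h)$ essentially by the same step with $\varphi$ and $\varphi^{-1}$ swapped.

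With both composites identified, I finish as follows. Because $q_c$ and $r_c$ are convolution-type automorphisms that are mutually inverse by construction (as noted just before the lemma, $r_c = q_c^{-1}$ — this is itself a short check using \eqref{e3} or can be cited), $r_c$ is invertible, hence $\theta_c\circ\theta_{c^{-1}}=r_c$ is invertible, so $\theta_c$ is surjective and $\theta_{c^{-1}}$ is injective; applying this with $c$ replaced by $c^{-1}=S(c)$ shows $\theta_{c^{-1}}$ is also surjective and $\theta_c$ injective, so both are bijective. Finally, composing $\theta_c\circ\theta_{c^{-1}}=r_c$ on the right with $r_c^{-1}=q_c$ gives $\theta_c\circ(\theta_{c^{-1}}\circ q_c)=\mathrm{id}$, so $\theta_c^{-1}=\theta_{c^{-1}}\circ q_c$; and composing the mirror identity $\theta_{c^{-1}}\circ\theta_c=q_c$ on the left with $q_c^{-1}=r_c$ — or, cleaner, using $\theta_c\circ\theta_{c^{-1}}=r_c$ rewritten — yields $\theta_c^{-1}=q_{c^{-1}}\circ\theta_{c^{-1}}$ after observing $q_{c^{-1}}=r_c$ (the coinner automorphism attached to $c^{-1}=S(c)$ has its two $\varphi$-legs swapped relative to $q_c$, hence equals $r_c$, since $S(S(c))=c$). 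The main obstacle I anticipate is purely bookkeeping: keeping the Sweedler indices on $c$, $S(c)$ and $h$ straight while applying \eqref{e1}, and correctly tracking which grouplike legs collapse so that the reassociator factors rearrange into the stated $r_c$, $q_c$; there is also a small point in justifying that $q_{c^{-1}}=r_c$ and $r_{c^{-1}}=q_c$, which follows from $S(c)$ being grouplike with $S(S(c))=c$ together with the symmetry of the definitions under swapping $\varphi\leftrightarrow\varphi^{-1}$ and $c\leftrightarrow S(c)$.
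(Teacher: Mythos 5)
Your overall strategy is the paper's: compute $\theta_c\circ\theta_{c^{-1}}$ by applying (\ref{e1}) with the grouplike legs collapsing and $cS(c)=1$, identify the result with $r_c$, do the mirror computation, and conclude bijectivity and the formula for $\theta_c^{-1}$ from the invertibility of the coinner automorphisms. The first composite is handled correctly. There is, however, a concrete error in the mirror step: applying (\ref{e1}) to $S(c)(ch)$ gives $S(c)(ch)=\varphi(S(c),c,h_1)\,h_2\,\varphi^{-1}(S(c),c,h_3)$, i.e.\ the map $r_{c^{-1}}$ attached to the grouplike $c^{-1}=S(c)$; the roles of $c$ and $S(c)$ inside the arguments of $\varphi$ get interchanged, but $\varphi$ does \emph{not} get replaced by $\varphi^{-1}$. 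Your claims that this composite equals $q_c(h)=\varphi^{-1}(c,S(c),h_1)h_2\varphi(c,S(c),h_3)$, and that $q_{c^{-1}}=r_c$ and $r_{c^{-1}}=q_c$, all amount to asserting that the functional $\varphi(S(c),c,-)$ is the convolution inverse of $\varphi(c,S(c),-)$. That is not among the axioms (\ref{e1})--(\ref{e6}) and does not follow from ``swapping the legs'': what (\ref{e3}) with grouplike entries actually yields are weaker relations such as $\varphi(c,S(c),cx)=\varphi(S(c),c,x)\,\varphi(c,S(c),c)$, and there is no reason for the asserted identity to hold for a general reassociator.

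The damage is local and repairable, because the conclusion never needed those identifications. With the correct statement $\theta_{c^{-1}}\circ\theta_c=r_{c^{-1}}$ (which is what the paper records), a left inverse of $\theta_c$ is $q_{c^{-1}}\circ\theta_{c^{-1}}$ simply because $q_{c^{-1}}=(r_{c^{-1}})^{-1}$ --- the same cancellation $\varphi*\varphi^{-1}=\varepsilon\otimes\varepsilon\otimes\varepsilon$ on the third leg that shows $q_c=r_c^{-1}$, applied to the grouplike $c^{-1}$. Combined with the right inverse $\theta_{c^{-1}}\circ q_c$ coming from $\theta_c\circ\theta_{c^{-1}}=r_c$, this gives exactly the stated $\theta_c^{-1}=\theta_{c^{-1}}\circ q_c=q_{c^{-1}}\circ\theta_{c^{-1}}$. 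So the fix is to replace ``$\theta_{c^{-1}}\circ\theta_c=q_c$'' by ``$\theta_{c^{-1}}\circ\theta_c=r_{c^{-1}}$'' and to delete the unproven identities $q_{c^{-1}}=r_c$ and $r_{c^{-1}}=q_c$, which play no essential role.
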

\begin{proof}

  Using (\ref{e1}) and the fact that $c^{-1}=S(c)$,
we see that

$$\theta_c\circ\theta_{c^{-1}}(h) =
c(c^{-1}h)= \varphi(c,S(c),h_1)(cS(c))h_2\varphi^{-1}(c,S(c),h_3)
   =
 r_c(h).$$

The same formula for $c^{-1}=S(c)$ yields
$\theta_{c^{-1}}\circ\theta_c=r_{c^{-1}}$ and the statement then
follows directly.
\end{proof}

\newpage
We can now prove our main result.\\[.5cm]
{\it Proof of Theorem \ref{TH}.}\\
We only need to prove the surjectivity. The proof goes along the
lines of the proof of \cite[Theorem 2.6]{i}, but with the difference
that here the antipode is not necessarily an anti-morphism of
algebras.
\par Let $\pi$ be the composition map
${}^aH\stackrel{p}{\rightarrow}Rat(H^*_{H^*})\stackrel{\sim}{\rightarrow}H\otimes
\int_r\simeq H$, where the last two isomorphisms follow by
left-right symmetry of the results of \cite{bc}. Since $H$ is a
co-Frobenius coalgebra, ${^H\!{H}}$ is projective by \cite[Theorem
1.3]{GTN} or \cite[Theorem 4.5, (x)]{bc}, and as $\pi$ is
surjective, there is a morphism of left $H$-comodules
 $\lambda:H\rightarrow {}^aH$
such that $\pi\lambda={\rm Id}_H$. We then have
$$ aS(\lambda(h)_2)\otimes\lambda(h)_1 =
  \lambda(h)^a_{-1}\otimes \lambda(h)^a_{0}=h_1\otimes\lambda(h_2).
$$
Applying ${\rm Id}\otimes \varepsilon\pi$, we get
$aS(\varepsilon\pi(\lambda(h)_1)\lambda(h)_2)=h$ for any $h \in H$.
Thus $\theta_a \circ S$ is surjective and since $\theta_a$ is
bijective by Lemma \ref{theta}, $S$ is surjective also.
$\hfill\square$

\vspace{3mm}

\thebibliography{MMM}

\bibitem{abe} E. Abe, {\it Hopf Algebras}, Cambridge Univ. Press, 1977.

\bibitem{bc} D. Bulacu, S. Caenepeel, Integrals for (dual) quasi-Hopf algebras. Applications, J. Algebra {\bf 266} (2003), no. 2, 552-583.

\bibitem{dnr} S. D\a sc\a lescu,  C. N\a st\a sescu, \S. Raianu, {\it Hopf Algebras: an Introduction}, Monographs and Textbooks in Pure and Applied Mathematics {\bf 235}, Marcel Dekker, Inc., New York, 2001.

\bibitem{D}
V.G. Drinfel'd, Quasi-Hopf Algebras, Leningrad Math. J. {\bf 1} (1990) 1419-1457.

\bibitem{GTN}
J. Gomez-Torrecillas, C. N\u ast\u asescu, Quasi-co-Frobenius coalgebras, J. Algebra {\bf 174} (1995), no. 3, 909-923.

\bibitem{hn}
F.Hausser, F. Nill, Integral theory for quasi-Hopf algebras, arXiv:math/9904164v2.

\bibitem{i}
M.C. Iovanov, Generalized Frobenius algebras and the theory of Hopf
algebras, preprint, arXiv:0803.0775.

\bibitem{M}
S. Majid, {\it Foundations of Quantum Group Theory}, Cambridge University Press, Cambridge, 1995.

\bibitem{pvo}
F. Panaite, F. van Oystaeyen, Existence of integrals for finite dimensional Hopf algebras, Bull. Belg. Math. Soc. Simon Stevin {\bf 7} (2000) 261-264.

\bibitem{R}
D.E. Radford, Finiteness conditions for a Hopf algebra with a nonzero integral, J. Algebra {\bf 46} (1977), no. 1, 189-195.

\bibitem{sw} M.E. Sweedler, {\it Hopf Algebras}, Benjamin, New York, 1969.

\end{document}